\theoremstyle{plain}
\newtheorem{lem}{Lemma}
\newtheorem{cor}[lem]{Corollary}
\newtheorem{prop}[lem]{Proposition}
\newtheorem{thm}[lem]{Theorem}
\theoremstyle{definition}
\newtheorem{defn}[lem]{Definition}
\newtheorem{assum}[lem]{Assumption}
\newcommand{\R}{\mathbb{R}}
\renewcommand{\P}{\mathbb{P}}
\newcommand{\E}{\mathbb{E}}
\newcommand{\N}{\mathbb{N}}
\renewcommand{\( }{\left(}
\newcommand{\Bcal}{{\mathcal B}}
\renewenvironment{proof}[1][\proofname] {\par\pushQED{\qed}\normalfont\topsep6\p@\@plus6\p@\relax\trivlist\item[\hskip\labelsep\bfseries#1\@addpunct{.}]\ignorespaces}{\popQED\endtrivlist\@endpefalse}
\begin{document}
	\title{Generalized Rank Dirichlet Distributions}
	\author{David Itkin\footnote{Department of Mathematics, Imperial College London, \url{d.itkin@imperial.ac.uk}}}

	\maketitle

	\begin{abstract}
		We study a new parametric family of distributions on the ordered simplex $\nabla^{d-1} = \{y \in \R^d: y_1 \geq \dots \geq y_d \geq 0, \ \sum_{k=1}^d y_k = 1\}$, which we call \emph{Generalized Rank Dirichlet (GRD)} distributions. Their density is proportional to $\prod_{k=1}^d y_k^{a_k-1}$ for a parameter $a = (a_1,\dots,a_d) \in \R^d$ satisfying $a_k + a_{k+1} + \dots + a_d > 0$ for $k=2,\dots,d$. The density is similar to the Dirichlet distribution, but is defined on $\nabla^{d-1}$, leading to different properties. In particular, certain components $a_k$ can be negative.
		Random variables $Y = (Y_1,\dots,Y_d)$ with GRD distributions have previously been used to model capital distribution in financial markets and more generally can be used to model ranked order statistics of weight vectors.
		 We obtain for any dimension $d$ \emph{explicit expressions} for moments of order $M \in \N$ for the $Y_k$'s and moments of all orders for the log gaps $Z_k = \log Y_{k-1} - \log Y_k$ when $a_1 + \dots + a_d = -M$. Additionally, we propose an algorithm to \emph{exactly} simulate  random variates in this case. In the general case $a_1 + \dots + a_d \in \R$ we obtain series representations for these quantities and provide an approximate simulation algorithm.
	\end{abstract}

	\paragraph{Keywords:} Generalized Rank Dirichlet Distribution, Dirichlet Distribution, Poisson--Dirichlet Distribution, Exponential Distribution, Ordered Simplex, Ranked Weights.
	\paragraph{MSC 2020 Classification:}
	Primary 60E05; Secondary 62G30
\section{Introduction}
For an integer $d \geq 2$ we study a parametric family of distributions defined on the ordered simplex 
\[\nabla^{d-1} = \{y \in \R^d: y_1 \geq y_2\geq \dots \geq y_d \geq 0\quad \text{ and } \quad y_1+\dots+y_d = 1\},\]
 whose density is proportional to \begin{equation} \label{eqn:density}
 	\prod_{k=1}^d y_k^{a_k-1}
 \end{equation}  for a parameter $a \in \R^d$. It was shown in \cite{itkin2021open} (and reproduced below in Proposition~\ref{prop:finite}) that this density induces a probability measure on $\nabla^{d-1}$, when appropriately normalized, if and only if 
 \begin{equation} \label{eqn:bar_a}
 	\bar a_k := a_k + a_{k+1} + \dots + a_d > 0, \quad \text{for } k=2,\dots,d.
 \end{equation} 
Notably, condition \eqref{eqn:bar_a} allows for certain $a_k$'s to be negative as long as the tail sum $\bar a_k$ remains positive. In fact, even parameters satisfying $a_k < 0$ for $k=1,\dots,d-1$ can be compatible with condition \eqref{eqn:bar_a} (as long as $a_d$ is sufficiently positive). 

In the special case $a_1 = a_2 = \dots = a_d > 0$, if $X \sim  \mathrm{Dirichlet}(a)$ then the ranked vector of decreasing order statistics $Y =(X_{(1)},\dots,X_{(d)})$ has density proportional to \eqref{eqn:density}. In the case that the components of $a$ are not all the same this relationship is no longer true. However, since the functional form of \eqref{eqn:density} is the same as for the Dirichlet density -- just defined on the ordered simplex rather than the standard simplex -- we call the induced probability distribution the \emph{generalized ranked Dirichlet} distribution with parameter $a$, or GRD($a$) for short.

 The GRD distribution can be used to model the distribution of ranked weight vectors that sum to one even for a general $a$ parameter. Indeed, if $X=(X_1,\dots,X_d)$ is a random (unordered) vector of nonnegative weights that sum to one with density proportional to $\prod_{k=1}^d x_{(k)}^{a_k-1}$ then the decreasing order statistics $Y=(X_{(1)},\dots,X_{(d)})$ follow a GRD($a$) distribution.

 To the best of the author's knowledge the general form of the GRD($a$) distribution under the condition \eqref{eqn:bar_a} first appeared as the invariant density of a certain stochastic process, called a \emph{rank Jacobi} process in \cite{itkin2021open}. Previously, the special case with $\bar a_1 = \sum_{k=1}^d a_k = 0$ had appeared in \cite{banner2005atlas,pal2008one,ichiba2011hybrid,fernholz2002stochastic}, where it arose as the invariant measure to a class of processes known as \emph{Atlas} or \emph{first-order models}. In particular, in \cite{banner2005atlas}, a connection to independent exponential random variables via the log gaps (see equation \eqref{eqn:log_gaps} below) was established. The analysis in this paper heavily exploits this relationship to exponential random variables in the case $\bar a_1 = 0$ to study GRD($a$) distributions for more general parameters $a$. 
 
 Arguably, the most well-studied distribution that models ranked weights is the \emph{Poisson--Dirichlet} \emph{(PD)} distribution introduced by Kingman in \cite{kingman1975random}. Indeed, it has found applications in a large number of fields including population genetics, number theory, physics, finance and statistics (see \cite{pitman1997two,feng2010poisson} for detailed accounts of the PD distribution). However, it is defined on the infinite dimensional \emph{Kingman simplex} $\{y \in \R^\infty: y_1 \geq y_2 \geq \dots \geq 0, \ \sum_{k=1}^\infty y_k = 1\}$ and as such is an infinite-dimensional distribution. In the author's PhD thesis \cite{itkin2022growth}, it was shown that, under appropriate assumptions on the parameter vector, the GRD distribution converges as $d \to \infty$ to a distribution on the Kingman simplex which is absolutely continuous with respect to a PD distribution with an explicitly given density. As such, the GRD family can be viewed as a finite dimensional relative of the PD distribution.

 Remarkably, even in the most basic case $d = 2$, the GRD distribution does not in general seem to be a standard probability distribution with a previously recorded name. When $d=2$ we can write $Y_2 = 1-Y_1$ and reduce to a one-dimensional random variable $Y_1$, which has density proportional to
\[y^{a_1-1}(1-y)^{a_2-1}, \quad y \in [1/2,1].\]
When $a_1 > 0$ this coincides with a truncated Beta distribution, but the case $a_1 \leq 0$ does not seem to have an established name.


 Nevertheless, this distribution has remarkable structural properties. In Section~\ref{sec:definition} we formally define the GRD distribution. Under the condition $\bar a_1 = 0$ the aforementioned relationship to independent exponential distributions is explored in Section~\ref{sec:bar_a1=0}, which we use to obtain \emph{negative} moments of all orders for the largest weight $Y_1$. In Section~\ref{sec:com} we then obtain a change of measure identity which establishes a relationship between GRD distributions with different parameters.
In Section~\ref{sec:bar_a1=-M} we explore the case $\bar a_1 = -M$ for some positive integer $M$. In this case the change of measure formula can be leveraged to obtain explicit expressions for the \emph{positive} moments of the $Y_k$'s up to order $M$, which are derived in Section~\ref{sec:positive_moments}. In particular, when $M = 1$, the moment formula is invertible with respect to the parameter vector $a$ allowing for explicit first moment matching. Additionally, it is shown in Section~\ref{sec:log_gaps} that the \emph{log gaps} 
\begin{equation} \label{eqn:log_gaps}
	Z_k = \log Y_{k-1} - \log Y_k, \quad \text{for } k=2,\dots,d 
\end{equation} can be represented as a mixture of exponential random variables when $\bar a_1 = -M$. This leads us to explicit formulas for the moment generating function and moments of all orders for the log gaps. Using the log gaps as an intermediary, in Section~\ref{sec:simulate}, we derive an algorithm to simulate exactly from the GRD($a$) distribution in the case $\bar a_1 = -M$. The general case when $\bar a_1$ is not assumed to be a negative integer is studied in Section~\ref{sec:general}. In this case we obtain a series representation for moments of the log gaps and leverage this to propose an approximate simulation algorithm to generate GRD($a$) random variates.
\paragraph{Notation.} The \emph{tail sum} notation of $\bar a_k = a_k + a_{k+1} + \dots + a_d$, as in \eqref{eqn:bar_a}, is in force throughout the paper. We write $e_1,\dots,e_d$ for the standard basis vectors in $\R^d$. We denote by $\N$ the natural numbers (starting from one) and $\N_0 = \N \cup \{0\}$. For an integer $M > 0$ we define $\N_0^d(M) = \{m \in \N^d_0: \bar m_1 = M\}$. By convention, empty sums are taken to be zero, while empty products are taken to be one. Since $\nabla^{d-1}$ is a $(d-1)$-dimensional subset of $\R^d$, all integrals over $\nabla^{d-1}$ should be understood as the pushforward of Lebesgue measure on $\R^{d-1}$ under the map $(y_1,\dots,y_{d-1}) \mapsto (y_1,\dots,y_{d-1},1-y_1- \dots -y_{d-1})
$.

\section{The GRD Distribution} \label{sec:definition}
Given $a \in \R^d$ we set $Q_a = \int_{\nabla^{d-1}}\prod_{k=1}^d y_k^{a_k-1}\, dy$. Then we have the following result already established in \cite{itkin2021open}. The proof is short and insightful so we reproduce it here.
\begin{prop}[Finite normalizing constant] \label{prop:finite} $Q_a < \infty$ if and only if $\bar a_k > 0$ for $k=2,\dots,d$.
\end{prop}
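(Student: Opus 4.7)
The plan is to unravel the density on $\nabla^{d-1}$ by a stick-breaking change of variables into a product form. Set
\[u_k := \bar y_{k+1}/\bar y_k, \qquad k=1,\dots,d-1,\]
which inverts to $\bar y_k = \prod_{j=1}^{k-1} u_j$, $y_k = (1-u_k)\bar y_k$ for $k<d$, and $y_d = \prod_{j=1}^{d-1} u_j$. This parametrizes $\nabla^{d-1}$ by a region $R\subset[0,1]^{d-1}$.

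I would first compute the Jacobian. The matrix $\partial y_k/\partial u_j$ is lower triangular (since $y_k$ depends only on $u_1,\dots,u_k$) with diagonal entries $-\bar y_k$, so $|\mathrm{Jac}|=\prod_{k=1}^{d-1}\bar y_k$. Then I would track how each $u_j$ appears in $\prod_k y_k^{a_k-1}$: the variable $u_j$ enters $y_k$ for every $k\geq j+1$ through the factor $\bar y_k$, contributing exponent $\sum_{k=j+1}^d(a_k-1)=\bar a_{j+1}-(d-j)$, while the Jacobian adds a further $d-1-j$ powers of $u_j$. These combine cleanly to yield
\[Q_a=\int_R \prod_{k=1}^{d-1}(1-u_k)^{a_k-1}\,u_k^{\bar a_{k+1}-1}\,du_1\cdots du_{d-1}.\]

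The final step is the geometric analysis of $R$. The ordering $y_k\geq y_{k+1}$ translates to $u_{d-1}\leq 1/2$ and $u_k\leq 1/(2-u_{k+1})$ for $k\leq d-2$; a short downward induction shows $u_k\leq(d-k)/(d-k+1)$, so every $u_k$ is bounded strictly away from $1$ on $R$. Consequently $(1-u_k)^{a_k-1}$ is bounded between positive constants on $R$ \emph{regardless} of the sign of $a_k$, which is the structural fact that permits certain $a_k$ to be negative. Conversely, since $u_{k+1}\geq 0$ forces $1/(2-u_{k+1})\geq 1/2$, we have $[0,1/2]^{d-1}\subset R$, so each $u_k$ can approach $0$ freely. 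Integrability therefore reduces to the one-dimensional question $\int_0^c u^{\bar a_{k+1}-1}\,du<\infty$, giving both directions: sufficiency by Fubini, necessity by restricting to $[0,1/2]^{d-1}$ and applying Tonelli to isolate the offending factor.

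The main obstacle I anticipate is the exponent bookkeeping in the change of variables — verifying that the integrand contribution $\bar a_{j+1}-(d-j)$ and the Jacobian contribution $d-1-j$ combine to exactly $\bar a_{j+1}-1$ for each $j$. This cancellation is precisely what produces the striking role of the \emph{tail sums} $\bar a_k$ rather than the individual $a_k$ in the final integrability criterion.
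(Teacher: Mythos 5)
Your proof is correct, and it takes a genuinely different route from the paper's. The paper first notes that $a_1$ is irrelevant to integrability because $1/d \leq y_1 \leq 1$, normalizes to $a_1 = -\bar a_2$, and then applies the log-gap change of variables $z_k = \log y_{k-1} - \log y_k$, which maps $\nabla^{d-1}$ onto the \emph{full} orthant $\R_+^{d-1}$ with Jacobian $dz = \prod_k y_k^{-1}\,dy$ and collapses $Q_a$ to $\prod_{k=2}^d \int_0^\infty e^{-\bar a_k z}\,dz$; no analysis of the image region is needed, and the computation directly exhibits the independent-exponential structure that powers the rest of the paper (Proposition~\ref{cor:bar_a1=0}). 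Your stick-breaking substitution $u_k = \bar y_{k+1}/\bar y_k$ instead produces a Beta-like integrand $\prod_k (1-u_k)^{a_k-1}u_k^{\bar a_{k+1}-1}$ over a non-product region $R$, so you need the extra geometric work showing $R \subset \prod_k[0,(d-k)/(d-k+1)]$ and $[0,1/2]^{d-1}\subset R$ — which you carry out correctly, and your anticipated ``obstacle'' does resolve: the exponent of $u_j$ is $\bigl(\bar a_{j+1}-(d-j)\bigr) + (d-1-j) = \bar a_{j+1}-1$ as claimed. What your route buys: no preliminary normalization of $a_1$, and a transparent explanation of why only the tail sums matter (the $(1-u_k)$ factors are bounded between positive constants because $u_k$ stays away from $1$ on $R$, while the only singularities sit at $u_k=0$). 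What the paper's route buys: a one-line exact factorization and the exponential/Pareto representation that is reused throughout. One small precision point: in the sufficiency direction ``Fubini'' should really be Tonelli applied on the enclosing product box $\prod_k[0,(d-k)/(d-k+1)]$ after bounding the $(1-u_k)^{a_k-1}$ factors by constants, since $R$ itself is not a product set; the necessity direction on $[0,1/2]^{d-1}$ is fine as stated.
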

\begin{proof}
	First note that the size or sign of $a_1$ does not effect integrability of $Q_a$ since $1/d \leq y_1 \leq 1$. Hence we assume without loss of generality that $a_1 = -\bar a_2$. Then we rewrite the integral as 
	\[Q_a =  \int_{\nabla^{d-1}} \prod_{k=2}^{d} \left(\frac{y_{k-1}}{y_{k}}\right)^{-\bar a_k}\prod_{k=1}^d y_k^{-1}\, dy.\]
	 Next consider the change of variables $z_k = \log (y_{k-1}) - \log(y_k)$ for $k=2,\dots,d$. This transformation maps the ordered simplex onto $\R_+^{d-1}$ and its Jacobian is determined by $dz = \prod_{k=1}^{d} y_k^{-1}dy$. Thus we obtain
	\[Q_a = \int_{\R^{d-1}_{+}} \exp\(-\sum_{k=2}^d\bar a_kz_k\)dz = \prod_{k=2}^d \int_0^\infty e^{-\bar a_kz}dz.\]
	This expression is finite if and only if $\bar a_k > 0$ for every $k =2,\dots,d$ completing the proof. 
\end{proof}
This leads us to the standing assumption mentioned in the introduction.
\begin{assum} \label{assum:bar_a}
The parameter vector $a \in \R^d$ satisfies $\bar a_k > 0$ for $k=2,\dots,d$.
\end{assum}
We can now formally define the GRD distribution.
\begin{defn}[Generalized Rank Dirichlet  (GRD) Distribution]
	For a parameter $a \in \R^d$ satisfying Assumption~\ref{assum:bar_a} the probability measure
	\[\P_a(A) = Q_a^{-1}\int_{\nabla^{d-1}} \prod_{k=1}^d y_k^{a_k-1}1_{A}(y)\, dy, \quad A \in \Bcal(\nabla^{d-1})\]
	is called a \emph{Generalized Rank Dirichlet (GRD)} distribution with paremeter $a$. We will write $Y \sim \mathrm{GRD}(a)$ for a random variable $Y$ with law $\P_a$ and denote by $\E_a[\cdot]$ expectation under $\P_a$.
\end{defn} 
\section{The case $\bar a_1 =0$} \label{sec:bar_a1=0}
An important special case of interest is when $\bar a_1  = 0$. In this case a similar calculation as in the proof of Proposition~\ref{prop:finite} shows that the log gaps $(Z_2,\dots,Z_d)$ given by \eqref{eqn:log_gaps} are distributed as independent exponentially distributed random variables whenever $Y \sim \mathrm{GRD}(a)$, and consequently, the weight ratios $Y_{k-1}/Y_k$ follow a Pareto distribution. Moreover, the normalizing constant $Q_a$ is explicitly computable in this case.
To the best of the author's knowledge the Pareto property was first observed in \cite{fernholz2002stochastic} and the relationship to independent exponential random variables was explored in \cite{banner2005atlas}. We collect these results in the following proposition.
\begin{prop}[Section~4 in \cite{banner2005atlas}] \label{cor:bar_a1=0}
	When $\bar a_1 = 0$ we have that $Q_a = \prod_{k=2}^d \bar a_k^{-1}$. Additionally the log gaps $(Z_2,\dots,Z_d)$ are independent and satisfy $Z_k \sim \mathrm{Exp}(\bar a_k)$, while the ratios $Y_{k-1}/Y_k$ are independent and satisfy $Y_{k-1}/Y_k \sim \mathrm{Pareto}(1,\bar a_k)$ for $k=2,\dots,d$.
\end{prop}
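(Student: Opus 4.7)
The plan is to reuse verbatim the change-of-variables argument from the proof of Proposition~\ref{prop:finite}, specialized to the case $\bar a_1=0$, and then read off the joint density of the log gaps directly.

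First I would note that when $\bar a_1=0$ the relation $a_1=-\bar a_2$ holds automatically, so the rewriting performed in the proof of Proposition~\ref{prop:finite}, namely
\[\prod_{k=1}^d y_k^{a_k-1}=\prod_{k=2}^d\left(\frac{y_{k-1}}{y_k}\right)^{-\bar a_k}\prod_{k=1}^d y_k^{-1},\]
is exact rather than a modification. Applying the diffeomorphism $z_k=\log y_{k-1}-\log y_k$ from $\nabla^{d-1}$ onto $\R_+^{d-1}$, whose Jacobian supplies exactly the factor $\prod_{k=1}^d y_k^{-1}\,dy=dz$, transforms the (unnormalized) GRD density into $\exp\!\bigl(-\sum_{k=2}^d\bar a_kz_k\bigr)$ on $\R_+^{d-1}$.

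Next I would extract all three conclusions from this transformed density. The integral separates as $Q_a=\prod_{k=2}^d\int_0^\infty e^{-\bar a_kz}\,dz=\prod_{k=2}^d\bar a_k^{-1}$, which gives the first claim. Dividing through by $Q_a$ then writes the joint law of $(Z_2,\dots,Z_d)$ as the product density $\prod_{k=2}^d\bar a_k\,e^{-\bar a_kz_k}$, so the log gaps are independent with $Z_k\sim\mathrm{Exp}(\bar a_k)$, yielding the second claim. Finally, since $Y_{k-1}/Y_k=e^{Z_k}$ is a deterministic, componentwise transformation of an independent family, the ratios are independent, and a one-line survival-function computation $\P(e^{Z_k}>t)=\P(Z_k>\log t)=t^{-\bar a_k}$ for $t\geq 1$ identifies the marginals as $\mathrm{Pareto}(1,\bar a_k)$.

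There is essentially no obstacle here: the content is already contained in Proposition~\ref{prop:finite}'s change of variables, and the only new ingredient is observing that when $\bar a_1=0$ the nuisance factor $y_1^{a_1-1}$ vanishes from the integrand so the factorization becomes a genuine product of exponential densities rather than a mere integrability bound. The mildest care needed is in confirming that $(z_2,\dots,z_d)\mapsto(y_1,\dots,y_d)$ is a bijection onto $\nabla^{d-1}$ (using $\sum_k y_k=1$ to recover $y_d$, and hence all other $y_k$, from the gaps), which is standard and already implicit in the earlier proof.
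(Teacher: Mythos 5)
Your proof is correct and follows essentially the same route the paper intends: it reuses the change of variables $z_k=\log y_{k-1}-\log y_k$ from the proof of Proposition~\ref{prop:finite}, noting that $\bar a_1=0$ forces $a_1=-\bar a_2$ so the exponential factorization is exact, and then reads off the normalizing constant, the independent $\mathrm{Exp}(\bar a_k)$ gaps, and the Pareto ratios via $Y_{k-1}/Y_k=e^{Z_k}$. No gaps.
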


These facts can be leveraged to compute certain expected ratios and \emph{negative} moments of $Y_1$.
\begin{thm} \label{thm:bar_a1=0_moments} Let $a \in \R^d$ satisfying Assumption~\ref{assum:bar_a} be given and suppose that $\bar a_1 = 0$.
	\begin{enumerate}
		\item \label{item:ratio_mn} (Moments of ratios) Let $n \in \N_0^d$ and $M \in \N$ such that $M \geq \bar n_1$ be given. Then 
			\begin{equation} \label{eqn:ratio_mn}
			\E_a\left[ \frac{\prod_{k=1}^d Y_k^{n_k}}{Y_1^M} \right] = \sum_{m \in \N_0^d(M-\bar n_1)} {M - \bar n_1\choose m_1,\dots, m_d } \prod_{k=2}^d \frac{\bar a_k}{\bar a_k + \bar m_k + \bar n_k}.
		\end{equation}
		\item \label{item:1/y_1_m} (Negative moments of $Y_1$) For any $M \in \N$, 
		\begin{equation} \label{eqn:1/y_1_m}
			\E_a\left[\frac{1}{Y_1^M}\right] = \sum_{m \in \N_0^d(M)}{M \choose m_1,\dots, m_d }\prod_{k=2}^d \frac{ \bar a_k}{\bar a_k + \bar m_k}.
		\end{equation}
	\end{enumerate}
\end{thm}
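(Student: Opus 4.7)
The plan is to pass to the log-gap representation from Proposition~\ref{cor:bar_a1=0}, expand the power $Y_1^{\bar n_1 - M}$ using the constraint $\sum_k Y_k = 1$, and then compute the resulting moments of the exponential log gaps in closed form.

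More precisely, I would first set $U_k := Y_k/Y_1 = \exp\bigl(-\sum_{j=2}^{k} Z_j\bigr)$ (with $U_1 = 1$), so that the unit-sum constraint yields $Y_1 = 1/\sum_{k=1}^d U_k$. Consequently
\begin{equation*}
\frac{\prod_{k=1}^d Y_k^{n_k}}{Y_1^M} \;=\; Y_1^{\bar n_1 - M}\prod_{k=1}^d U_k^{n_k} \;=\; \Bigl(\sum_{k=1}^d U_k\Bigr)^{M-\bar n_1}\prod_{k=1}^d U_k^{n_k}.
\end{equation*}
The hypothesis $M \geq \bar n_1$ is used precisely here, to ensure the exponent is a nonnegative integer so that the multinomial theorem applies:
\begin{equation*}
\Bigl(\sum_{k=1}^d U_k\Bigr)^{M-\bar n_1} \;=\; \sum_{m \in \N_0^d(M-\bar n_1)} \binom{M-\bar n_1}{m_1,\dots,m_d}\prod_{k=1}^d U_k^{m_k}.
\end{equation*}

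Next I would rewrite each monomial $\prod_k U_k^{m_k+n_k}$ in terms of the log gaps by interchanging the order of summation:
\begin{equation*}
\prod_{k=1}^d U_k^{m_k + n_k} \;=\; \exp\!\Bigl(-\sum_{k=1}^d (m_k+n_k)\sum_{j=2}^k Z_j\Bigr) \;=\; \exp\!\Bigl(-\sum_{j=2}^d (\bar m_j + \bar n_j)\, Z_j\Bigr).
\end{equation*}
By Proposition~\ref{cor:bar_a1=0}, the $Z_j$ are independent with $Z_j \sim \mathrm{Exp}(\bar a_j)$, and since $\bar m_j + \bar n_j \geq 0$ the moment generating function $\E[e^{-\lambda Z_j}] = \bar a_j/(\bar a_j + \lambda)$ applies. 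Taking expectations term by term in the finite multinomial sum produces exactly the right-hand side of \eqref{eqn:ratio_mn}, proving part \ref{item:ratio_mn}. Part \ref{item:1/y_1_m} is then immediate: specialize to $n = 0$, noting $\bar n_1 = 0$ and $\bar n_k = 0$.

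There is essentially no hard step -- the only delicate point is the index manipulation that turns $\sum_k (m_k+n_k)\sum_{j \leq k} Z_j$ into $\sum_j (\bar m_j + \bar n_j) Z_j$, and the condition $M \geq \bar n_1$ which guarantees the multinomial expansion terminates. Convergence of the expectation is automatic: the sum is finite and each factor $\bar a_j/(\bar a_j + \bar m_j + \bar n_j)$ lies in $(0,1]$, so no dominated-convergence argument is needed.
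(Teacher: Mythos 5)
Your proof is correct and follows essentially the same route as the paper: both hinge on inserting $(Y_1+\dots+Y_d)^{M-\bar n_1}=1$, expanding by the multinomial theorem, and evaluating each resulting term using the $\bar a_1=0$ structure from Proposition~\ref{cor:bar_a1=0}. The only cosmetic difference is that you evaluate the base-case expectation via the moment generating function of the independent exponential log gaps, whereas the paper reads it off as the ratio of normalizing constants $Q_{a+n-Me_1}/Q_a$ — two equivalent ways of doing the same computation.
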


\begin{proof}
	First we assume that $\bar n_1 = M$. In this case note that the expectation on the left hand side of \eqref{eqn:ratio_mn} is given by $Q_{a + n - Me_1}/Q_a$. Since $(\overline{a + n -  Me_1})_1  = 0$ we obtain
	\begin{equation} \label{eqn:ratio_m}
		\E_a\left[ \frac{\prod_{k=1}^d Y_k^{n_k}}{Y_1^M} \right] = \E_a\left[\frac{\prod_{k=1}^d Y_k^{n_k}}{Y_1^{\bar n_1}}\right] =  \prod_{k=2}^d \frac{ \bar a_k}{\bar a_k + \bar n_k}
	\end{equation} by Proposition~\ref{cor:bar_a1=0}, which proves \ref{item:ratio_mn} in this case.
	
	To prove \ref{item:ratio_mn} in the general case we use the multinomial formula to obtain
	
	\begin{align*} \E_a\left[ \frac{\prod_{k=1}^d Y_k^{n_k}}{Y_1^M} \right] & = \E_a\left[ \frac{\prod_{k=1}^d Y_k^{n_k}(Y_1+\dots+Y_d)^{M-\bar n_1}}{Y_1^M} \right] \\
		& = \sum_{m \in \N_0^d(M)}  {M - \bar n_1\choose m_1,\dots, m_d } \E_a\left[ \frac{ \prod_{k=1}^d Y_k^{n_k + m_k}}{Y_1^M} \right] \\
		& 
		= \sum_{m \in \N_0^d(M)}  {M - \bar n_1\choose m_1,\dots, m_d } \prod_{k=2}^d \frac{\bar a_k}{\bar a_k + \bar m_k + \bar n_k}.
	\end{align*} 
	In the last equality we used \eqref{eqn:ratio_m}, which is applicable since $\bar n_1 + \bar m_1 = M$. Finally \eqref{eqn:1/y_1_m} follows by taking $n = 0$ in \eqref{eqn:ratio_mn}.
\end{proof}
\section{A change of measure formula} \label{sec:com}
We now derive a change of measure identity, which holds for \emph{any} GRD distribution. This identity is the workhorse for the computations to come.
\begin{thm}[Change of measure] \label{thm:com}
	Fix $a, b \in \R^d$ satisfying Assumption~\ref{assum:bar_a}. Let $f:\nabla^{d-1} \to \R$ be a function that is integrable under $\P_a$. Then 
	\begin{equation} \label{eqn:com}
		\E_a[f(Y)] = \frac{\E_{b}[f(Y)\prod_{k=1}^d Y_k^{a_k -  b_k}]}{\E_{b}[\prod_{k=1}^d Y_k^{a_k -  b_k}]}.
	\end{equation}
\end{thm}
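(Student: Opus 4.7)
The plan is to argue directly from the definition of the GRD density: the ratio of the two densities is exactly the factor $\prod_k y_k^{a_k - b_k}$ up to the ratio of normalizing constants, so \eqref{eqn:com} will reduce to a bookkeeping exercise and a self-consistency check via $f \equiv 1$.

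First I would write
\[
\E_a[f(Y)] \;=\; Q_a^{-1}\int_{\nabla^{d-1}} f(y)\prod_{k=1}^d y_k^{a_k-1}\,dy \;=\; Q_a^{-1}\int_{\nabla^{d-1}} f(y) \prod_{k=1}^d y_k^{a_k-b_k}\prod_{k=1}^d y_k^{b_k-1}\,dy,
\]
where the algebraic identity $a_k - 1 = (a_k - b_k) + (b_k - 1)$ is used in the exponents. Recognizing the last product, together with $Q_b^{-1}$, as the GRD($b$) density, I obtain the preliminary identity
\[
\E_a[f(Y)] \;=\; \frac{Q_b}{Q_a}\,\E_b\!\left[f(Y)\prod_{k=1}^d Y_k^{a_k-b_k}\right].
\]
Both $Q_a$ and $Q_b$ are finite by Proposition~\ref{prop:finite} since both $a$ and $b$ satisfy Assumption~\ref{assum:bar_a}, so this equality is well-posed whenever $f$ is $\P_a$-integrable.

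To eliminate the unknown ratio $Q_b/Q_a$, I then specialize the identity above to $f \equiv 1$, which yields
\[
1 \;=\; \frac{Q_b}{Q_a}\,\E_b\!\left[\prod_{k=1}^d Y_k^{a_k-b_k}\right],
\]
so that $Q_b/Q_a = \bigl(\E_b[\prod_k Y_k^{a_k-b_k}]\bigr)^{-1}$. The denominator is strictly positive and finite because $Y_k > 0$ almost surely on the interior of $\nabla^{d-1}$ and because the integral defining this expectation equals $Q_a/Q_b$, which I already know is finite and positive. Substituting this expression for $Q_b/Q_a$ back into the preliminary identity gives \eqref{eqn:com}.

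There is no real obstacle here: the only delicate point is ensuring that the denominator in \eqref{eqn:com} is a well-defined nonzero real number, and this is handled automatically by the fact that both parameters $a$ and $b$ satisfy Assumption~\ref{assum:bar_a}, which makes the expectation under $\P_b$ in the denominator equal to $Q_a/Q_b \in (0,\infty)$.
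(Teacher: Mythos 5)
Your proposal is correct and follows essentially the same route as the paper: both split the exponent as $a_k-1 = (a_k-b_k)+(b_k-1)$, multiply and divide by $Q_b$, and identify the resulting denominator $\E_b[\prod_k Y_k^{a_k-b_k}]$ with $Q_a/Q_b$ (your $f\equiv 1$ step is just an explicit restatement of that identification). No substantive difference.
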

\begin{proof}
	We see that 
	\begin{align*}
		\E_a[f(Y)] & = \frac{\int_{\nabla^{d-1}} f(y)\prod_{k=1}^dy_k^{a_k-1}\, dy}{\int_{\nabla^{d-1}} \prod_{k=1}^dy_k^{a_k-1}\, dy} \\
		& = \frac{\int_{\nabla^{d-1}} f(y)\prod_{k=1}^d y_k^{a_k- b_k}\prod_{k=1}^dy_k^{ b_k-1}\, dy}{\int_{\nabla^{d-1}} \prod_{k=1}^dy_k^{b_k-1}\, dy} \times \frac{\int_{\nabla^{d-1}} \prod_{k=1}^dy_k^{ b_k-1}\, dy}{\int_{\nabla^{d-1}} \prod_{k=1}^d y_k^{a_k- b_k} \prod_{k=1}^dy_k^{b_k-1}\, dy} \\
		& = \frac{\E_{ b}[f(Y)\prod_{k=1}^d y_k^{a_k -  b_k}]}{\E_{b}[\prod_{k=1}^d y_k^{a_k - b_k}]},
	\end{align*} 
	where in the intermediate equality we multiplied and divided by $Q_{b}= \int_{\nabla^{d-1}} \prod_{k=1}^dy_k^{ b_k-1}\, dy.$ 
\end{proof}
As we saw in Section~\ref{sec:bar_a1=0}, the case when the sum of the parameters is zero is particularly tractable. Thus a canonical choice for the vector $b$ in the change of measure formula is $b = a - \bar a_1e_1$, in which case $\bar b_1 = 0$. Under this choice \eqref{eqn:com} becomes
\begin{equation} \label{eqn:com_bar_b1=0}
\E_a[f(Y)] = \frac{\E_{a- \bar a_1e_1}[f(Y)Y_1^{\bar a_1}]}{\E_{a- \bar a_1e_1}[Y_1^{\bar a_1}]}.
\end{equation} 
\section{The case $\bar a_1 = -M$} \label{sec:bar_a1=-M}

\subsection{Moments of the $Y_k$'s} \label{sec:positive_moments}
Remarkably, the identities for the negative moments of $Y_1$ when $\bar a_1 = 0$ can be used to derive \emph{positive} moments, up to order $M$, for a GRD($a$) distribution when $\bar a_1 = -M$. This is the content of the next theorem.
\begin{thm}[Moment formulas for $\bar a_1 = -M$] Suppose that $a \in \R^d$ satisfies Assumption~\ref{assum:bar_a} and that $\bar a_1 = -M$ for some $M \in \N$. Then for any $n \in \N^d_0$ with $\bar n_1 \leq M$ we have that 
	\[\E_a\left[\prod_{k=1}^d Y_k^{n_k}\right] = \frac{ \displaystyle  \sum_{m \in \N_0^d(M-\bar n_1)}  {M - \bar n_1\choose m_1,\dots, m_d } \prod_{k=2}^d \frac{\bar a_k}{\bar a_k + \bar m_k + \bar n_k}} {\displaystyle \sum_{m \in \N_0^d(M)} {M \choose m_1,\dots, m_d }\prod_{k=2}^d \frac{ \bar a_k}{\bar a_k + \bar m_k}}. \]
\end{thm}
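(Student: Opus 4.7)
The plan is to apply the change of measure formula \eqref{eqn:com_bar_b1=0}, taking the canonical choice $b = a - \bar a_1 e_1 = a + Me_1$ which shifts only the first coordinate so that $\bar b_1 = 0$. Since $b$ and $a$ agree in components $2, \dots, d$, the tail sums satisfy $\bar b_k = \bar a_k$ for $k = 2,\dots,d$; in particular $\bar b_k > 0$ so Assumption~\ref{assum:bar_a} holds for $b$, and the tail-sum quantities appearing in the final formula are unambiguous whether one reads them as $\bar a_k$ or $\bar b_k$.

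Applying \eqref{eqn:com_bar_b1=0} to $f(Y) = \prod_{k=1}^d Y_k^{n_k}$ and using $\bar a_1 = -M$ gives
\[
\E_a\!\left[\prod_{k=1}^d Y_k^{n_k}\right] \;=\; \frac{\E_b\!\left[\dfrac{\prod_{k=1}^d Y_k^{n_k}}{Y_1^{M}}\right]}{\E_b\!\left[\dfrac{1}{Y_1^{M}}\right]}.
\]
Because $\bar b_1 = 0$, both expectations on the right are covered by Theorem~\ref{thm:bar_a1=0_moments}. The numerator is exactly the quantity in part~\ref{item:ratio_mn}, and the hypothesis $M \geq \bar n_1$ needed to apply it is precisely our standing assumption on $n$. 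The denominator is the quantity in part~\ref{item:1/y_1_m}. Substituting the two closed-form sums in and using $\bar b_k = \bar a_k$ for $k \geq 2$ yields the claimed identity term-for-term.

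There is no substantive obstacle here; the proof is essentially a single application of the change of measure together with the two identities already established under $\bar a_1 = 0$. The only point requiring care is verifying that the hypothesis $M \geq \bar n_1$ in part~\ref{item:ratio_mn} of Theorem~\ref{thm:bar_a1=0_moments} aligns with the hypothesis $\bar n_1 \leq M$ in the statement being proved, and that the admissibility condition Assumption~\ref{assum:bar_a} is preserved when passing from $a$ to $b$ — both of which are immediate.
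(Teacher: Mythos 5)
Your proof is correct and follows exactly the paper's own argument: apply the change of measure \eqref{eqn:com_bar_b1=0} with $b = a + Me_1$ to $f(Y) = \prod_{k=1}^d Y_k^{n_k}$, then evaluate numerator and denominator via parts \ref{item:ratio_mn} and \ref{item:1/y_1_m} of Theorem~\ref{thm:bar_a1=0_moments}. Your additional checks (preservation of Assumption~\ref{assum:bar_a} under the shift and the alignment of the hypothesis $\bar n_1 \leq M$) are sensible and consistent with what the paper leaves implicit.
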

\begin{proof}
	This follows directly by taking $f(Y) = \prod_{k=1}^d Y_k^{n_k}$ in \eqref{eqn:com_bar_b1=0} and invoking Theorem~\ref{thm:bar_a1=0_moments} to compute the right hand side of \eqref{eqn:com_bar_b1=0}.
\end{proof}
When $M=1$ this formula takes a particularly simple form
\begin{equation} \label{eqn:moment_m}
	\E_a[Y_k] = C^{-1} \prod_{j=2}^{k} \frac{\bar a_j}{\bar a_j + 1}, \qquad \text{where} \qquad C = 1 + \sum_{k=2}^d \prod_{j=2}^k  \frac{ \bar a_j}{\bar a_j + 1}. 
\end{equation}
In particular this formula is invertible, which allows for explicit first moment matching, which can be used to calibrate the parameters to data.
\begin{cor}[First moment matching]
	Let $y \in \nabla^{d-1}$ satisfying $y_1 > y_2 > \dots > y_d$ be given. Define $a \in \R^d$ via
	\[a_k =  \begin{cases} -1 -  \frac{y_{2}}{y_{1} - y_2}, & k=1, \\
		\frac{y_{k}}{y_{k-1} - y_k} - \frac{y_{k+1}}{y_{k} - y_{k+1}}, & k=2,\dots,d-1, \\
		\frac{y_d}{y_{d-1} - y_d} & k = d.
	\end{cases}\]
	Then $a$ satisfies Assumption~\ref{assum:bar_a}, $\bar a_1 = -1$ and $\E_a[Y_k] = y_k$ for $k=1,\dots,d$.
\end{cor}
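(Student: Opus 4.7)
The plan is to reverse-engineer the moment formula \eqref{eqn:moment_m} to determine what the tail sums $\bar a_k$ must be in order to hit the target means $y_k$, and then check that the explicit $a_k$ given in the statement are exactly the increments of this sequence. Since \eqref{eqn:moment_m} gives
\[
\frac{\E_a[Y_k]}{\E_a[Y_{k-1}]} = \frac{\bar a_k}{\bar a_k + 1}, \quad k=2,\dots,d,
\]
setting this equal to $y_k/y_{k-1}$ and solving yields the forced choice $\bar a_k = y_k/(y_{k-1}-y_k)$. The strict inequalities $y_{k-1} > y_k$ (together with $y_k > 0$ for $k\geq 2$, which follows from $y_1 > \dots > y_d$ and $\sum y_k = 1$ provided $y_d > 0$; else the statement must be read as excluding that degenerate case) make $\bar a_k > 0$, so Assumption~\ref{assum:bar_a} will hold.

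Next I would verify that the explicit formula for $a_k$ in the statement is consistent with this prescription. Since $a_k = \bar a_k - \bar a_{k+1}$ for $k=2,\dots,d-1$ and $a_d = \bar a_d$, substituting $\bar a_k = y_k/(y_{k-1}-y_k)$ recovers exactly the formulas given for $k=2,\dots,d$. For $k=1$, forcing $\bar a_1 = -1$ requires $a_1 = -1 - \bar a_2 = -1 - y_2/(y_1 - y_2)$, matching the first case of the definition. Hence $\bar a_1 = -1$ automatically, and Assumption~\ref{assum:bar_a} is satisfied.

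Finally I would substitute back into \eqref{eqn:moment_m} to check the mean-matching. The products telescope:
\[
\prod_{j=2}^k \frac{\bar a_j}{\bar a_j + 1} = \prod_{j=2}^k \frac{y_j}{y_{j-1}} = \frac{y_k}{y_1},
\]
so the normalizing constant is
\[
C = 1 + \sum_{k=2}^d \frac{y_k}{y_1} = \frac{1}{y_1}\sum_{k=1}^d y_k = \frac{1}{y_1},
\]
and therefore $\E_a[Y_k] = C^{-1} y_k/y_1 = y_k$, as required. There is no real obstacle here beyond bookkeeping; the only mildly subtle point is recognizing that the ratios of consecutive means uniquely determine $\bar a_k$, so the formulas in the statement are not a guess but the unique choice dictated by \eqref{eqn:moment_m}.
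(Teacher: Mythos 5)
Your proposal is correct and follows the same route as the paper, whose proof is simply the one-line remark that the claim ``is readily verified by applying \eqref{eqn:moment_m} to this choice of $a$''; you have filled in exactly that verification (telescoping $\bar a_k = y_k/(y_{k-1}-y_k)$, products collapsing to $y_k/y_1$, and $C = 1/y_1$). Your side remark that $y_d>0$ is needed for $\bar a_d>0$ is a fair observation about an implicit nondegeneracy assumption, but it does not change the argument.
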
 
\begin{proof}
	This is readily verified by applying \eqref{eqn:moment_m} to this choice of $a$.
\end{proof}
\subsection{An improved change of measure formula}

In the case that $\bar a_1 = -M$ for some $M \in \N$, the denominator of \eqref{eqn:com_bar_b1=0} is explicitly computable courtesy of Theorem~\ref{thm:bar_a1=0_moments}. By writing $1 = (Y_1+\dots+Y_d)^M$ we can also expand the numerator to obtain that
\begin{equation}  \label{eqn:com_calc}
\begin{split}\E_{a+Me_1}\left[\frac{f(Y)}{Y_1^{M}}\right]&  = \sum_{m \in \N_0^d(M)}{M \choose m_1,\dots, m_d }\E_{a+Me_1}\left[f(Y)\frac{\prod_{k=1}^d Y_k^{m_k}}{Y_1^M}\right] \\
	& = \sum_{m \in \N_0^d(M)} {M \choose m_1,\dots, m_d }\E_{a+Me_1}\left[\frac{\prod_{k=1}^d Y_k^{m_k}}{Y_1^M}\right] \E_{a+m}[f(Y)]\\& =  \sum_{m \in \N_0^d(M)} {M \choose m_1,\dots, m_d } \prod_{k=2}^d \frac{\bar a_k}{\bar a_k + \bar m_k} \E_{a+m}[f(Y)],
\end{split} 
\end{equation} 
where the intermediate equality followed from Theorem~\ref{thm:com} (with $a$ taken to be $a+m$ and $b$ taken to be $a+Me_1$ in the notation of the theorem), while the final equality followed from Theorem~\ref{thm:bar_a1=0_moments}\ref{item:ratio_mn} since $\overline{(a+Me_1)}_1 = 0$. This leads us to the following improved change of measure formula.
\begin{thm}[Change of measure v2] \label{thm:com_v2}
	Let $a \in \R^d$ satisfying Assumption~\ref{assum:bar_a} be given and suppose that $\bar a_1 = -M$ for some $M \in \N$. Then we have that
	 \begin{equation} \label{eqn:com_v2}
	 	\E_a[f(Y)] = \sum_{m \in \N_0^d(M)} w_m\E_{a+m}[f(Y)] \quad \text{where} \quad  	w_m = \frac{{M \choose m_1,\dots, m_d }\prod_{k=2}^d \frac{\bar a_k}{\bar a_k + \bar m_k}}{\sum_{m \in \N^d_0(M)}{M \choose m_1,\dots, m_d }\prod_{k=2}^d \frac{\bar a_k}{\bar a_k + \bar m_k}}
	 \end{equation} 
 for any $\P_a$-integrable function $f:\nabla^{d-1} \to \R$.
\end{thm}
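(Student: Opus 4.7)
The plan is to reduce the proof to two ingredients already established in the paper: the specialized change-of-measure identity \eqref{eqn:com_bar_b1=0} and the negative moment formula of Theorem~\ref{thm:bar_a1=0_moments}\ref{item:1/y_1_m}, together with the expansion performed in display \eqref{eqn:com_calc}. First I would apply \eqref{eqn:com_bar_b1=0} with $\bar a_1 = -M$, which gives
\begin{equation*}
\E_a[f(Y)] = \frac{\E_{a+Me_1}[f(Y)/Y_1^{M}]}{\E_{a+Me_1}[1/Y_1^{M}]}.
\end{equation*}
The reference parameter $b := a+Me_1$ satisfies $\bar b_1 = 0$ while $\bar b_k = \bar a_k$ for $k \geq 2$, placing us in the setting of Section~\ref{sec:bar_a1=0}.

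Next I would unpack the numerator. Inserting $1 = (Y_1+\cdots+Y_d)^M$ and applying the multinomial theorem, then invoking Theorem~\ref{thm:com} once more with roles $a \mapsto a+m$ and $b \mapsto a+Me_1$, each summand factors as
\begin{equation*}
\E_{a+Me_1}\!\left[f(Y)\,\tfrac{\prod_k Y_k^{m_k}}{Y_1^M}\right] = \E_{a+m}[f(Y)] \cdot \E_{a+Me_1}\!\left[\tfrac{\prod_k Y_k^{m_k}}{Y_1^M}\right].
\end{equation*}
The final factor is handled by Theorem~\ref{thm:bar_a1=0_moments}\ref{item:ratio_mn}: since $\bar m_1 = M$, the inner summation there collapses to the single term corresponding to the zero multi-index and leaves $\prod_{k=2}^d \bar a_k/(\bar a_k + \bar m_k)$. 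This reproduces precisely \eqref{eqn:com_calc}. The denominator $\E_{a+Me_1}[1/Y_1^M]$ is then read off either from the special case $f\equiv 1$ above or directly from \eqref{eqn:1/y_1_m}, and equals $\sum_{m \in \N_0^d(M)} \binom{M}{m_1,\dots,m_d} \prod_{k=2}^d \bar a_k/(\bar a_k + \bar m_k)$. Dividing numerator by denominator yields \eqref{eqn:com_v2} with the stated weights $w_m$.

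I do not expect a substantive obstacle: the theorem is essentially a repackaging of \eqref{eqn:com_calc} as a convex combination, and the normalizing sum in $w_m$ is exactly the denominator of \eqref{eqn:com_bar_b1=0}, so the weights will automatically be nonnegative and sum to one. The only points worth a brief verification are that each shifted parameter $a+m$ still satisfies Assumption~\ref{assum:bar_a} (immediate, since for $k \geq 2$ the tail sums $\bar a_k$ only increase when $m \in \N_0^d(M)$) and that $\P_a$-integrability of $f$ transfers to each $\P_{a+m}$, which follows because $d\P_{a+m}/d\P_a$ is proportional to $\prod_k Y_k^{m_k}$ and hence uniformly bounded by $1$ on $\nabla^{d-1}$.
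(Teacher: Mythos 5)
Your argument is correct and follows essentially the same route as the paper: the paper's proof is precisely the computation in display \eqref{eqn:com_calc} — apply \eqref{eqn:com_bar_b1=0}, insert $1=(Y_1+\cdots+Y_d)^M$, factor each summand via Theorem~\ref{thm:com} with parameters $a+m$ and $a+Me_1$, evaluate the ratio moment by Theorem~\ref{thm:bar_a1=0_moments}\ref{item:ratio_mn}, and normalize by \eqref{eqn:1/y_1_m}. Your added remarks on Assumption~\ref{assum:bar_a} for $a+m$ and the transfer of integrability (the Radon--Nikodym derivative is bounded by the constant $Q_a/Q_{a+m}$, not necessarily by $1$, but bounded nonetheless) are harmless verifications the paper leaves implicit.
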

Since the $w_m$'s appearing in \eqref{eqn:com_v2} are positive weights which sum to one, Theorem~\ref{thm:com_v2} establishes that $\P_a$ can be explicitly represented as a mixture of GRD distributions with parameters that sum to zero. This relationship can be leveraged to obtain certain moment formulas for the weights and log gaps, which are explored in the sections below. Additionally, marginal distributions for the weights under the GRD($a$) distribution can be studied with this change of measure identity as well, though we do not pursue this direction in detail here. 

\subsection{The log gaps as a mixture of exponential random variables} \label{sec:log_gaps}
The change of measure formula of Theorem~\ref{thm:com_v2} is particularly insightful when we consider the log gaps $Z_k = \log Y_{k-1} - \log Y_k$ for $k=2,\dots,d$. Indeed, since $Z$ is a function of $Y$, we readily obtain the following corollary to Theorem~\ref{thm:com_v2}.
\begin{cor}[Change of measure for log gaps]  \label{cor:com_gaps} Let $a \in \R^d$ satisfying Assumption~\ref{assum:bar_a} be given and suppose that $\bar a_1 = -M$ for some $M \in \N$. For any function $g:\R^{d-1}_+ \to \R$ such that $g(Z)$ is $\P_a$-integrable we have
	\begin{equation} \label{eqn:com_gaps}\E_{a}[g(Z)] = \sum_{m \in \N^d_0(M)} w_m\E_{a+m}[g(Z)],
	\end{equation} 
	where $w_m$ is defined in \eqref{eqn:com_v2}. In particular the the log gaps $(Z_2,\dots,Z_d)$ under $\P_a$ are a mixture of independent exponential random vectors.
\end{cor}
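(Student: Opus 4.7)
The plan is to reduce the statement to a direct application of Theorem~\ref{thm:com_v2} combined with Proposition~\ref{cor:bar_a1=0}, since essentially all the work has already been done in the earlier sections. The key observation is that the log gaps $Z = (Z_2,\dots,Z_d)$ are a deterministic function of $Y$ through \eqref{eqn:log_gaps}, so any $g(Z)$ can be viewed as $f(Y) := g(Z(Y))$ for a measurable $f:\nabla^{d-1}\to\R$.

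First I would verify that $\P_a$-integrability of $g(Z)$ is equivalent to $\P_a$-integrability of $f(Y)$, which is immediate since $Z$ is a Borel function of $Y$. With that in hand, I would apply Theorem~\ref{thm:com_v2} to this $f$ to obtain
\[
\E_a[g(Z)] = \E_a[f(Y)] = \sum_{m \in \N_0^d(M)} w_m\, \E_{a+m}[f(Y)] = \sum_{m \in \N_0^d(M)} w_m\, \E_{a+m}[g(Z)],
\]
which is exactly \eqref{eqn:com_gaps}. This takes care of the change of measure identity.

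For the structural statement about the mixture, I would note that for each $m \in \N_0^d(M)$ the shifted parameter $a+m$ satisfies $\overline{(a+m)}_1 = \bar a_1 + \bar m_1 = -M + M = 0$, while $\overline{(a+m)}_k = \bar a_k + \bar m_k > 0$ for $k=2,\dots,d$ so Assumption~\ref{assum:bar_a} is preserved. Hence Proposition~\ref{cor:bar_a1=0} applies under each $\P_{a+m}$ and yields that, under $\P_{a+m}$, the log gaps $(Z_2,\dots,Z_d)$ are independent with $Z_k \sim \mathrm{Exp}(\bar a_k + \bar m_k)$. Substituting this into the displayed identity expresses the law of $(Z_2,\dots,Z_d)$ under $\P_a$ as the convex combination, with weights $w_m$, of product laws of independent exponentials, which is precisely the claimed mixture representation.

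There is no real obstacle here: the entire argument is a transport of Theorem~\ref{thm:com_v2} through the change of variables $Y \mapsto Z$, plus the observation that the parameter shifts $a \mapsto a+m$ land exactly in the regime $\bar\cdot_1 = 0$ where Proposition~\ref{cor:bar_a1=0} gives independent exponentials. The only very minor bookkeeping step is to check that the weights $w_m$ are nonnegative and sum to one, which is built into Theorem~\ref{thm:com_v2}, so the mixture is a genuine probability mixture.
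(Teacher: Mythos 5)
Your proposal is correct and follows exactly the paper's own argument: identity \eqref{eqn:com_gaps} is obtained by applying Theorem~\ref{thm:com_v2} to $f(Y) = g(Z(Y))$, and the mixture claim follows from Proposition~\ref{cor:bar_a1=0} together with the observation that $\bar a_1 + \bar m_1 = 0$ for each $m \in \N_0^d(M)$. The extra bookkeeping you include (preservation of Assumption~\ref{assum:bar_a} under the shift $a \mapsto a+m$, and the weights forming a probability vector) is a welcome elaboration of details the paper leaves implicit.
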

\begin{proof}
	The formula \eqref{eqn:com_gaps} is a direct consequence of Theorem~\ref{thm:com_v2}, while the claim regarding the mixture of independent exponential distributions follows from Proposition~\ref{cor:bar_a1=0} and the fact that $\bar a_1 + \bar m_1 = 0$ for every $m \in \N^d_0(M)$.
\end{proof}
As an application of Corollary~\ref{cor:com_gaps} we obtain the moment generating function and moments of the log gaps.

\begin{cor}[Log gap moments] \label{thm:gap_moments}
Let $a \in \R^d$ satisfying Assumption~\ref{assum:bar_a} be given and suppose that $\bar a_1 = -M$ for some $M \in \N$. Set $C = \E_{a+Me_1}[1/Y_1^M]$, which is explicitly given by \eqref{eqn:1/y_1_m} since $(\overline{a+Me_1})_1 = 0$. Then 
	\begin{enumerate}
		\item the moment generating function of the log gaps $Z_2,\dots,Z_d$ is given by
		\[\hspace{-0.2cm} \E_a[e^{t_2Z_2 + \dots + t_dZ_d}] = C^{-1}\sum_{m \in \N_0^d(M)} {M\choose m_1,\dots, m_d }\prod_{k=2}^d \frac{ \bar a_k}{\bar a_k - t_k + \bar m_k}; \quad t_k < \bar a_k \quad \text{for }  k=2,\dots,d,\]
		\item for any $n = (n_2,\dots,n_d) \in \N_0^{d-1}$ we have that 
		\[\E_a\left[\prod_{k=2}^d Z_k^{n_k}\right] = C^{-1}\sum_{m \in \N_0^d(M)} {M\choose m_1,\dots, m_d }\prod_{k=2}^d \frac{ \bar a_k n_k!}{(\bar a_k + \bar m_k)^{n_k+1}}. \]
	\end{enumerate}
\end{cor}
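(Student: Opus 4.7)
The plan is to apply Corollary~\ref{cor:com_gaps} directly and reduce both identities to standard calculations for independent exponential random variables. The key observation is that for every $m \in \N_0^d(M)$ we have $\overline{(a+m)}_1 = \bar a_1 + \bar m_1 = -M + M = 0$, so $a+m$ falls into the regime of Proposition~\ref{cor:bar_a1=0}. Moreover, since adding $m$ only shifts the parameters pointwise, the tail sums satisfy $\overline{(a+m)}_k = \bar a_k + \bar m_k$ for $k = 2,\dots,d$. Hence under $\P_{a+m}$ the log gaps $(Z_2,\dots,Z_d)$ are independent with $Z_k \sim \mathrm{Exp}(\bar a_k + \bar m_k)$.

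For part (i), I would use the standard MGF of an exponential to obtain
\[
\E_{a+m}[e^{t_2 Z_2 + \dots + t_d Z_d}] = \prod_{k=2}^d \frac{\bar a_k + \bar m_k}{\bar a_k + \bar m_k - t_k},
\]
which is finite whenever $t_k < \bar a_k + \bar m_k$; since $\bar m_k \geq 0$, the assumption $t_k < \bar a_k$ suffices uniformly in $m$. Plugging this into \eqref{eqn:com_gaps} and using the explicit form of $w_m$ from \eqref{eqn:com_v2}, the factors $\bar a_k + \bar m_k$ in the numerator of the MGF cancel with the denominators inside $w_m$, leaving
\[
\E_a[e^{t_2 Z_2 + \dots + t_d Z_d}] = C^{-1}\sum_{m \in \N_0^d(M)} \binom{M}{m_1,\dots,m_d} \prod_{k=2}^d \frac{\bar a_k}{\bar a_k + \bar m_k - t_k}.
\]
The normalizing constant $C$ is identified with $\E_{a+Me_1}[1/Y_1^M]$ by observing that $\overline{(a+Me_1)}_1 = 0$ and $\overline{(a+Me_1)}_k = \bar a_k$ for $k \geq 2$, so Theorem~\ref{thm:bar_a1=0_moments}\ref{item:1/y_1_m} applied at parameter $a+Me_1$ yields the required sum.

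For part (ii), the argument is identical with the MGF replaced by integer moments: under $\P_{a+m}$ one uses $\E[Z_k^{n_k}] = n_k!/(\bar a_k + \bar m_k)^{n_k}$ and independence to get $\E_{a+m}[\prod_k Z_k^{n_k}] = \prod_k n_k!/(\bar a_k + \bar m_k)^{n_k}$. Averaging with weights $w_m$ via \eqref{eqn:com_gaps} and again cancelling one factor of $\bar a_k + \bar m_k$ against the denominator in $w_m$ produces the displayed formula.

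There is essentially no hard step here — the content is all in the mixture representation of Corollary~\ref{cor:com_gaps} together with Proposition~\ref{cor:bar_a1=0}. The only bookkeeping point to be careful about is the domain of validity of the MGF: one must verify that $t_k < \bar a_k$ is enough to ensure each exponential MGF in the mixture is finite, which follows because $\bar m_k \geq 0$ for every $m$ in the (finite) index set $\N_0^d(M)$.
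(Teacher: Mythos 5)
Your proposal is correct and follows exactly the route the paper takes: the paper's proof is the one-line observation that the result "follows directly from Corollary~\ref{cor:com_gaps} and known formulas for exponential random variables," and your write-up simply spells out those computations (the cancellation of the $\bar a_k + \bar m_k$ factors against the weights $w_m$, the identification of the normalizing constant with $C$, and the domain check $t_k < \bar a_k \le \bar a_k + \bar m_k$), all of which are accurate.
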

\begin{proof}
	This follows directly from Corollary~\ref{cor:com_gaps} and known formulas for exponential random variables.
\end{proof}
\subsection{Generation of random variates} \label{sec:simulate}
We finish Section~\ref{sec:bar_a1=-M} by discussing a way to simulate a random vector $Y$ following a $\P_a$ distribution when $\bar a_1 = -M$. This can be done by first simulating the log gap random vector $Z$ under $\P_a$ using the relationship in Corollary~\ref{cor:com_gaps} and then inverting the maps $Y \mapsto (Z_2,\dots,Z_d) = (\log Y_{1}-\log Y_2,\dots,\log Y_{d-1}- \log Y_d)$. To carry this out we define a random variable $V$ on $\N^d_0(M)$ via $\P(V=m) = w_m$. The simulation steps are then as follows

\begin{minipage}{.7\linewidth} 
\begin{algorithm}[H] \scriptsize
	\caption{Simulating GRD($a$) when $\bar a_1 = -M$}\label{alg:sim_int}
\begin{algorithmic}[1]

	\State $m \gets V$   \hfill ($\triangleright$) sample $V$
		\State Initialize vector $Z = [Z_2,\dots,Z_d]$
	\For{$k=2,\dots,d$}
		\State Simulate one variate from $\mathrm{Exp}(\bar a_k + \bar m_k)$ and store in $Z_k$
	\EndFor
	\State $Y_1 \gets (1 +\sum_{k=2}^d\exp(-\sum_{j=2}^k Z_j))^{-1}$
	\For{$k=2,\dots,d$}
	\State $Y_k \gets Y_{k-1}\exp(-Z_k)$
	\EndFor
\end{algorithmic}
\end{algorithm}
\end{minipage}

\vspace{0.2cm}

\noindent  This ensures that $Y \sim \P_a$. We note that the presentation of the algorithm above is simply pseudocode and the implementation can be made more efficient by vectorizing the operations.
\section{The General Case} \label{sec:general}
In the case that $\bar a_1 \ne -M$ the change of measure formula can still be used to study the GRD distributions. Indeed, by applying Newton's generalized binomial theorem we can obtain a series representation $\E_a[Y_1^{-r}]$ for arbitrary $r \in \R$ in the case $\bar a_1 = 0$.
\begin{prop}[Expected powers of $Y_1$] \label{prop:Y1_series} 
	Let $a \in \R^d$ satisfying Assumption~\ref{assum:bar_a} be given and suppose that $\bar a_1 = 0$. Then for any $r \in \R$ we have
	\begin{equation} \label{eqn:com_series}
		\E_a\left[\frac{1}{Y_1^r}\right] = \sum_{k=0}^\infty {r \choose k} \sum_{j=0}^k {k \choose j} (-1)^{k-j}d^{r-j}\sum_{m \in \N_0^d(j)}{j \choose m_1,\dots,m_d} \prod_{i=2}^d \frac{\bar a_i}{\bar a_i + \bar m_i}.
	\end{equation} 
\end{prop}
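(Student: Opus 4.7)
The plan is to apply Newton's generalized binomial theorem to expand $1/Y_1^r$ as a power series in the bounded quantity $1/Y_1 - d$, then take expectations term by term and invoke Theorem~\ref{thm:bar_a1=0_moments}\ref{item:1/y_1_m} to evaluate the resulting integer negative moments of $Y_1$.

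More precisely, since $Y \in \nabla^{d-1}$ forces $1/d \leq Y_1 \leq 1$, we have $(1/Y_1 - d)/d \in [-1+1/d, \, 0]$, so $|(1/Y_1 - d)/d| \leq 1 - 1/d < 1$. Writing
\[
\frac{1}{Y_1^r} = d^r\left(1 + \frac{1/Y_1 - d}{d}\right)^r = \sum_{k=0}^\infty \binom{r}{k} d^{r-k}(1/Y_1 - d)^k,
\]
Newton's generalized binomial theorem guarantees absolute convergence. Applying the ordinary (finite) binomial theorem to $(1/Y_1 - d)^k$ yields
\[
\frac{1}{Y_1^r} = \sum_{k=0}^\infty \binom{r}{k} \sum_{j=0}^k \binom{k}{j}(-1)^{k-j} d^{r-j} Y_1^{-j}.
\]
Taking expectations under $\P_a$ and applying Theorem~\ref{thm:bar_a1=0_moments}\ref{item:1/y_1_m} to evaluate $\E_a[Y_1^{-j}]$ in the innermost term produces exactly the right hand side of \eqref{eqn:com_series}.

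The main technical obstacle is justifying the interchange of the infinite sum and the expectation. For this I would note that the pointwise bound $|1/Y_1 - d| \leq d - 1$ holds $\P_a$-almost surely, giving the uniform bound
\[
\sum_{k=0}^\infty \left|\binom{r}{k}\right| d^{r-k} |1/Y_1 - d|^k \leq d^r\sum_{k=0}^\infty \left|\binom{r}{k}\right| (1-1/d)^k,
\]
and the latter series converges since $1 - 1/d < 1$ (for any real $r$, the generalized binomial series for $(1-x)^r$ or its absolute-value analog converges on $|x| < 1$). This constant bound is trivially integrable, so dominated convergence justifies swapping the sum and $\E_a$. After the swap, every term is a finite sum (in $j$ and $m$) of integer negative moments of $Y_1$, which are handled by Theorem~\ref{thm:bar_a1=0_moments}\ref{item:1/y_1_m}, yielding the stated formula.
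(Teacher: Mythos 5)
Your proposal is correct and follows essentially the same route as the paper: expand $1/Y_1^r = d^r(1 + (\tfrac{1}{dY_1}-1))^r$ via Newton's generalized binomial theorem (your $(1/Y_1-d)/d$ is exactly the paper's $\tfrac{1}{dY_1}-1$), apply the ordinary binomial theorem, and evaluate $\E_a[Y_1^{-j}]$ with Theorem~\ref{thm:bar_a1=0_moments}\ref{item:1/y_1_m}. The only difference is that you explicitly justify the sum--expectation interchange by dominated convergence, a step the paper leaves implicit.
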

\begin{proof}
	We write $1/Y_1 = d(1+ \frac{1-dY_1}{dY_1})$. Note that since $1/d \leq Y_1 \leq 1$ we have that $|\frac{1-dY_1}{dY_1}| < 1$. Hence, applying Newton's binomial theorem and taking expectation yields
	\[\E_a\left[\frac{1}{Y_1^r}\right] = d^r \sum_{k=0}^\infty {r \choose k} \E_a\left[\left(\frac{1}{dY_1}-1\right)^k\right].  \]
	Now applying the standard binomial theorem to the term inside the expectation and using the identity derived in Theorem~\ref{thm:bar_a1=0_moments}\ref{item:1/y_1_m} completes the proof. 
\end{proof}
We now combine this with the change of measure formula to obtain the following theorem.
	\begin{thm}[Change of measure series representation]\label{thm:com_series}
	Let $a \in \R^d$ satisfying Assumption~\ref{assum:bar_a} be given and suppose that $\bar a_1 = -r$ for some $r \in \R$. Then for any $\P_a$-integrable function $f:\nabla^{d-1}\to \R$ we have that 
	\begin{equation} \label{eqn:countable_mixture}
		\E_a[f(Y)] = \sum_{k=0}^\infty \sum_{j=0}^k\sum_{m \in \N_0^d(j)} w_m^{r,j,k}\E_{a+ m +(r-j)e_1}[f(Y)],
	\end{equation} 
	where 
	\[w_m^{r,j,k} = C^{-1}{r \choose k}{k \choose j}(-1)^{k-j}d^{r-j}{j \choose m_1,\dots,m_d}\prod_{i=2}^d \frac{\bar a_i}{\bar a_i + \bar m_i}\] 
	and $C = \E_{a+re_1}[1/Y_1^r]$ is given explicitly by \eqref{eqn:com_series}.
	\end{thm}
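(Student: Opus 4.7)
The plan is to follow the same pattern as the derivation of Theorem~\ref{thm:com_v2}, but replace the multinomial expansion of $1 = (Y_1+\dots+Y_d)^M$ (which is only available when $M \in \N$) by the combined Newton--standard binomial expansion of $Y_1^{-r}$ already exploited in Proposition~\ref{prop:Y1_series}.

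Concretely, I would first apply the canonical change-of-measure formula \eqref{eqn:com_bar_b1=0} with base $b = a + re_1$, for which $\bar b_1 = 0$, to get
\[\E_a[f(Y)] = C^{-1}\,\E_{a+re_1}\bigl[f(Y)\,Y_1^{-r}\bigr], \qquad C = \E_{a+re_1}[Y_1^{-r}].\]
Mimicking the proof of Proposition~\ref{prop:Y1_series}, I write $1/Y_1 = d\bigl(1 + (\tfrac{1}{dY_1}-1)\bigr)$ and use the uniform bound $\bigl|\tfrac{1}{dY_1}-1\bigr| \leq 1 - \tfrac{1}{d} < 1$ available on $\nabla^{d-1}$. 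Applying Newton's generalized binomial theorem followed by the standard binomial theorem then gives
\[Y_1^{-r} = \sum_{k=0}^{\infty}\sum_{j=0}^{k}\binom{r}{k}\binom{k}{j}(-1)^{k-j}d^{r-j}\,Y_1^{-j}.\]

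Next, for each integer $j \geq 0$ I use the multinomial identity $1 = (Y_1+\dots+Y_d)^j$ to write $Y_1^{-j} = \sum_{m \in \N_0^d(j)} \binom{j}{m_1,\dots,m_d}\prod_k Y_k^{m_k}/Y_1^j$, exactly as in the derivation of \eqref{eqn:com_calc}. Applying Theorem~\ref{thm:com} with base $\P_{a+re_1}$ (whose parameter sums to zero) and target parameter $a+m+(r-j)e_1$ (which satisfies Assumption~\ref{assum:bar_a}, since $\bar a_k + \bar m_k > 0$ for $k \geq 2$), and invoking Theorem~\ref{thm:bar_a1=0_moments}\ref{item:ratio_mn} to evaluate $\E_{a+re_1}[\prod_k Y_k^{m_k}/Y_1^j] = \prod_{i=2}^d \bar a_i/(\bar a_i+\bar m_i)$ (valid since $\bar m_1 = j$ and the base parameter has zero sum), I obtain
\[\E_{a+re_1}\bigl[f(Y)\,Y_1^{-j}\bigr] = \sum_{m \in \N_0^d(j)}\binom{j}{m_1,\dots,m_d}\prod_{i=2}^d\frac{\bar a_i}{\bar a_i+\bar m_i}\,\E_{a+m+(r-j)e_1}[f(Y)].\]
Plugging this back into the binomial expansion and recalling the definition of $C$ produces exactly the stated series.

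The main obstacle is justifying the interchange of the infinite $k$-sum with the expectation $\E_{a+re_1}[\,\cdot\,]$ for an arbitrary $\P_a$-integrable $f$. To this end I would note that the partial sums $S_N(Y_1) = \sum_{k=0}^{N}\sum_{j=0}^{k}\binom{r}{k}\binom{k}{j}(-1)^{k-j}d^{r-j}Y_1^{-j}$ converge to $Y_1^{-r}$ uniformly on $\nabla^{d-1}$ and satisfy the uniform bound $|S_N(Y_1)| \leq d^{|r|}\sum_{k=0}^{\infty}\bigl|\binom{r}{k}\bigr|(1-\tfrac{1}{d})^k < \infty$. Because $1/d \leq Y_1 \leq 1$, the Radon--Nikodym density between $\P_a$ and $\P_{a+re_1}$ is bounded above and below, so $f \in L^1(\P_a)$ is equivalent to $f \in L^1(\P_{a+re_1})$, and dominated convergence against the majorant $(\mathrm{const})\,|f(Y)|$ legitimises the interchange, completing the argument.
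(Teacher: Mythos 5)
Your proposal follows essentially the same route as the paper's proof: the change of measure \eqref{eqn:com_bar_b1=0} with base $a+re_1$, the Newton/standard binomial expansion of $Y_1^{-r}$ as in Proposition~\ref{prop:Y1_series}, and the multinomial/change-of-measure evaluation of $\E_{a+re_1}[f(Y)Y_1^{-j}]$ exactly as in \eqref{eqn:com_calc}. The only difference is that you explicitly justify interchanging the infinite $k$-sum with the expectation via a uniform bound and dominated convergence, a step the paper leaves implicit; this is a welcome refinement rather than a different argument.
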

\begin{proof}
	From the change of measure identity \eqref{eqn:com_bar_b1=0} we have that
	\[\E_a[f(Y)] = \frac{\E_{a+re_1}[f(Y)Y_1^{-r}]}{\E_{a+re_1}[Y_1^{-r}]}\]
	The denominator has the series representation given by Proposition~\ref{prop:Y1_series}. To handle the numerator we use Newton's binomial theorem to expand out $Y_1^{-r} = d(1+ \frac{1-dY_1}{dY_1})$ as before, multiply both sides by $f(Y)$ and take expectation to obtain
	\begin{equation}  \label{eqn:series_calc}
	\begin{split}
		\E_{a+re_1}[f(Y)Y_1^{-r}]& = d^r\sum_{k=0}^\infty {r \choose k} \E_a\left[f(Y)\left(\frac{1}{dY_1}-1\right)^k\right] \\
		& = \sum_{k=0}^\infty {r \choose k}\sum_{j=0}^k {k \choose j} (-1)^{k-j}d^{r-j}\E_{a+re_1}[f(Y)Y_1^{-j}],
	\end{split}
	\end{equation} 
where we used the standard binomial theorem in the final equality.
Proceeding as in \eqref{eqn:com_calc} we obtain
\begin{align*}
	\E_{a+re_1}\left[\frac{f(Y)}{Y_1^{j}}\right] & = \E_{a+re_1}\left[f(Y)\frac{(Y_1+\dots+Y_d)^j}{Y_1^j}\right] 
	 = \sum_{m \in \N_0^d(j)} {j \choose m_1,\dots,m_d} \E_{a+re_1}\left[f(Y)\frac{\prod_{k=1}^d Y_k^{m_k}}{Y_1^j}\right] \\
	 &= \sum_{m \in \N_0^d(j)} {j \choose m_1,\dots,m_d}\E_{a+re_1}\left[ \frac{\prod_{l=1}^d Y_l^{m_l}}{Y_1^j}\right]\E_{a+m+(r-j)e_1}[f(Y)]  \\
	 & = \sum_{m \in \N_0^d(j)} {j \choose m_1,\dots,m_d} \prod_{i=2}^d \frac{\bar a_i}{\bar a_i + \bar m_i}\E_{a+m+(r-j)e_1}[f(Y)].
\end{align*}
Plugging this into \eqref{eqn:series_calc} completes the proof.
\end{proof}

The upshot of this theorem is that we can represent an arbitrary GRD($a$) distribution as a countable mixture of GRD distributions where the parameter vectors sum to zero. Applying this to the log gap process $Z$ as in Section~\ref{sec:log_gaps} shows, in turn, that the log gaps under an arbitrary GRD($a$) distribution are a countable mixture of independent exponential random variables. This leads to series representation formulas for the log generating function and moments of the log gaps.
\begin{cor}[Log gap moments series representation] Let $a \in \R^d$ satisfying Assumption~\ref{assum:bar_a} be given.
Then 
\begin{enumerate}
	\item the moment generating function of the log gaps $Z_2,\dots,Z_d$ is given by
	\[\E_a[e^{t_2Z_2 + \dots + t_dZ_d}]  = \sum_{k=0}^\infty \sum_{j=0}^k\sum_{m \in \N_0^d(j)} w_m^{- \bar a_1,j,k} \prod_{i=2}^d \frac{\bar a_i}{\bar a_i - t_i + \bar m_i}, \quad t_i < \bar a_i \quad \text{for } i =2,\dots,d,\]
	\item for any $n = (n_2,\dots,n_d) \in \N_0^{d-1}$ we have that 
	\[\E_a\left[\prod_{k=2}^d Z_k^{n_k}\right] = \sum_{k=0}^\infty \sum_{j=0}^k\sum_{m \in \N_0^d(j)} w_m^{- \bar a_1,j,k} \prod_{i=2}^d \frac{\bar a_in_i!}{(\bar a_i + \bar m_i)^{n_i+1}} \]
\end{enumerate}
where $w_m^{-{\bar a_1},j,k}$ is defined in the statement of Theorem~\ref{thm:com_series}.
\end{cor}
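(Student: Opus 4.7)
The plan is to apply Theorem~\ref{thm:com_series} to $f(Y) = \exp\bigl(\sum_{i=2}^d t_i Z_i\bigr)$ for part~(i) and to $f(Y) = \prod_{k=2}^d Z_k^{n_k}$ for part~(ii); in both cases $f$ depends on $Y$ only through the log gaps, so the theorem is directly applicable. Setting $r = -\bar a_1$, Theorem~\ref{thm:com_series} represents
\[
\E_a[f(Y)] = \sum_{k=0}^\infty \sum_{j=0}^k \sum_{m \in \N_0^d(j)} w_m^{-\bar a_1, j, k}\, \E_{a + m + (r-j)e_1}[f(Y)],
\]
so everything is reduced to evaluating the inner expectation in closed form for each component of the mixture.

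The key observation is that the shift $(r-j)e_1$ affects only the first coordinate of the parameter vector, so I would first check that $\bigl(\overline{a+m+(r-j)e_1}\bigr)_1 = \bar a_1 + \bar m_1 + (r-j) = -r + j + r - j = 0$, while the tail sums $\bar a_i + \bar m_i$ for $i \geq 2$ are unchanged. Proposition~\ref{cor:bar_a1=0} then applies to each mixture component and asserts that, under $\P_{a+m+(r-j)e_1}$, the log gaps $Z_2,\dots,Z_d$ are independent with $Z_i \sim \mathrm{Exp}(\bar a_i + \bar m_i)$. I would then plug in the standard identities $\E[e^{tX}] = \lambda/(\lambda - t)$ (for $t < \lambda$) and $\E[X^n] = n!/\lambda^n$ for $X \sim \mathrm{Exp}(\lambda)$ to obtain, by independence,
\[
\E_{a+m+(r-j)e_1}\!\bigl[e^{\sum t_i Z_i}\bigr] = \prod_{i=2}^d \frac{\bar a_i + \bar m_i}{\bar a_i + \bar m_i - t_i}, \qquad \E_{a+m+(r-j)e_1}\!\Bigl[\prod_{i=2}^d Z_i^{n_i}\Bigr] = \prod_{i=2}^d \frac{n_i!}{(\bar a_i + \bar m_i)^{n_i}}.
\]

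Substituting these expressions back into the series and combining with the factor $\prod_{i=2}^d \bar a_i/(\bar a_i + \bar m_i)$ already contained in $w_m^{-\bar a_1, j, k}$ produces, after the natural cancellation of the $(\bar a_i + \bar m_i)$ factors in the exponential numerators, the two displayed formulas. The whole argument is a mechanical substitution; the only point requiring a moment's care is the appeal to Theorem~\ref{thm:com_series} for part~(i), which demands $\P_a$-integrability of $e^{\sum t_i Z_i}$, but this is ensured by the hypothesis $t_i < \bar a_i$ (since $\bar a_i \leq \bar a_i + \bar m_i$ each mixture component also has a finite moment generating function), and absolute convergence of the triple series permits the interchange of summation and expectation via Fubini. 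There is no genuine obstacle here, as the substantive work has already been carried out in Theorem~\ref{thm:com_series}.
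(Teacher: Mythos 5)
Your argument is correct and is precisely the paper's (unwritten) proof of this corollary: apply Theorem~\ref{thm:com_series} to functions of the log gaps, observe that each mixture parameter $a+m+(r-j)e_1$ has vanishing first tail sum and unchanged tail sums $\bar a_i+\bar m_i$ for $i\geq 2$ so that Proposition~\ref{cor:bar_a1=0} gives independent $\mathrm{Exp}(\bar a_i+\bar m_i)$ log gaps, and insert the standard exponential moment generating function and moment formulas. One caveat: the cancellation you describe yields $\sum_{k,j,m}\tilde w_m\,\prod_{i=2}^d \bar a_i/(\bar a_i+\bar m_i-t_i)$ where $\tilde w_m$ is $w_m^{-\bar a_1,j,k}$ \emph{with the factor} $\prod_{i=2}^d \bar a_i/(\bar a_i+\bar m_i)$ \emph{removed}, so your computation does not literally reproduce the displayed formulas, which (taken with the definition of $w_m^{-\bar a_1,j,k}$ from Theorem~\ref{thm:com_series}) carry that factor twice; your derivation is the correct one, consistent with how Corollary~\ref{thm:gap_moments} writes its weights, and the discrepancy is a redundancy in the printed statement rather than a flaw in your argument.
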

Moreover, the representation of $Z$ as a countable mixture of independent exponential random variables suggests an approximate algorithm for generating random GRD($a$) variates for arbitrary parameter $a$ by truncating the series appearing in \eqref{eqn:countable_mixture}. If we keep the first $K+1 \in \N$ terms in the series then by rearranging the terms in the sum we obtain from \eqref{eqn:countable_mixture} that
\[ \E_{a}[f(Y)] \approx \sum_{j=0}^K \sum_{m \in \N_0^d(j)}\tilde w_m^{-\bar a_1,j}(K)\E_{a+m+(-\bar a_1-j)}[f(Y)],\]
where
\[\tilde w_m^{-\bar a_1,j}(K) =  \frac{\sum_{k=0}^K{r \choose k}{k \choose j}(-1)^{k-j}d^{r-j}{j \choose m_1,\dots,m_d}\prod_{i=2}^d \frac{\bar a_i}{\bar a_i + \bar m_i}}{\sum_{j =0}^K \sum_{m \in \N^d_0(j)}\sum_{k=0}^K{r \choose k}{k \choose j}(-1)^{k-j}d^{r-j}{j \choose m_1,\dots,m_d}\prod_{i=2}^d \frac{\bar a_i}{\bar a_i + \bar m_i}}. \]  
Consequently, if we define the random variable $V^K$ on the discrete set $\{m \in \N_0^d: \bar m_1 \leq K\}$ via
\[\P(V^K = m) = w_m^{-\bar a_1, \bar m_1}(K)\] then we obtain an algorithm to approximately sample from the GRD($a$) distribution for arbitrary parameter $a$.
 
\begin{minipage}{.7\linewidth} 
	\begin{algorithm}[H]
		\caption{Simulating GRD($a$) in the general case}\label{alg:sim_gen}
		\begin{algorithmic}[1] \scriptsize
			\Require $K \in \N$
			\State $m \gets V^K$   \hfill ($\triangleright$) sample $V^K$
			\State Initialize vector $Z = [Z_2,\dots,Z_d]$
			\For{$k=2,\dots,d$}
			\State Simulate one variate from $\mathrm{Exp}(\bar a_k + \bar m_k)$ and store in $Z_k$
			\EndFor
			\State $Y_1 \gets (1 +\sum_{k=2}^d\exp(-\sum_{j=2}^k Z_j))^{-1}$
			\For{$k=2,\dots,d$}
			\State $Y_k \gets Y_{k-1}\exp(-Z_k)$
			\EndFor
		\end{algorithmic}
	\end{algorithm}
\end{minipage}

\section{Conclusion}
We introduced the family GRD($a$) of distributions on the ordered simplex $\nabla^{d-1}$. We established change of measure formulas that relate GRD($a$) distributions with different parameters to each other. In the case that $\bar a_1 = -M$ for some $M \in \N$ we exploited the change of measure identity to show that such a distribution is a (finite) mixture of GRD distributions with parameters that sum to zero. This, together with the fact that the log gaps $Z$ are independent exponential random variables when the parameters sum to zero, was used to establish moment formulas, up to order $M$, for the weights as well as for moments of all orders for the log gaps. This led to an algorithm which allows one to \emph{exactly} sample the weights $Y$. In the case $M =1$, the first moment formula is invertible allowing for explicit moment matching which can be used for calibration to data. In the general case when $\bar a_1 \in \R$, we were able to recover many of the same properties, but under series representations rather than finite sums. This led us to an algorithm for approximately sampling the weights $Y$ in this case.

\paragraph{Acknowledgements.} I am grateful to Martin Larsson for helpful discussions.

%
%

\bibliographystyle{plain}
\bibliography{references}

\begin{thebibliography}{1}

\bibitem{banner2005atlas}
Adrian~D. Banner, Robert Fernholz, and Ioannis Karatzas.
\newblock Atlas models of equity markets.
\newblock {\em Ann. Appl. Probab.}, 15(4):2296--2330, 2005.

\bibitem{feng2010poisson}
Shui Feng.
\newblock {\em The {P}oisson-{D}irichlet distribution and related topics}.
\newblock Probability and its Applications (New York). Springer, Heidelberg,
  2010.
\newblock Models and asymptotic behaviors.

\bibitem{fernholz2002stochastic}
E.~Robert Fernholz.
\newblock {\em Stochastic portfolio theory}, volume~48 of {\em Applications of
  Mathematics (New York)}.
\newblock Springer-Verlag, New York, 2002.
\newblock Stochastic Modelling and Applied Probability.

\bibitem{ichiba2011hybrid}
Tomoyuki Ichiba, Vassilios Papathanakos, Adrian Banner, Ioannis Karatzas, and
  Robert Fernholz.
\newblock Hybrid {A}tlas models.
\newblock {\em Ann. Appl. Probab.}, 21(2):609--644, 2011.

\bibitem{itkin2022growth}
David Itkin.
\newblock {\em Growth Optimization in Stochastic Portfolio Theory with
  Applications to Robust Finance and Open Markets}.
\newblock PhD thesis, Carnegie Mellon University, 2022.

\bibitem{itkin2021open}
David Itkin and Martin Larsson.
\newblock Open markets and hybrid {J}acobi processes.
\newblock {\em arXiv preprint arXiv:2110.14046}, 2021.

\bibitem{kingman1975random}
John~FC Kingman.
\newblock Random discrete distributions.
\newblock {\em Journal of the Royal Statistical Society: Series B
  (Methodological)}, 37(1):1--15, 1975.

\bibitem{pal2008one}
Soumik Pal and Jim Pitman.
\newblock One-dimensional {B}rownian particle systems with rank-dependent
  drifts.
\newblock {\em Ann. Appl. Probab.}, 18(6):2179--2207, 2008.

\bibitem{pitman1997two}
Jim Pitman and Marc Yor.
\newblock The two-parameter {P}oisson-{D}irichlet distribution derived from a
  stable subordinator.
\newblock {\em Ann. Probab.}, 25(2):855--900, 1997.

\end{thebibliography}

\end{document}